\newcommand{\CC}{{\mathbb C}}
\newcommand\cT{{\mathcal T}}
\newcommand\adots{\mathinner{\mkern2mu\raise1pt\hbox{.}
\mkern3mu\raise4pt\hbox{.}\mkern1mu\raise7pt\hbox{.}}}
\newcommand{\rank}{{\rm rank }}
\newtheorem{theo}{Theorem}[section]
\newtheorem{prop}[theo]{Proposition}
\newtheorem{rem}[theo]{Remark}
\numberwithin{equation}{section}
\begin{document}

%\title{\bf A local greedy algorithm for global numerical continuation of 
\title{\bf A local greedy algorithm and higher-order extensions for global numerical continuation of 
analytically varying subspaces}

\author{\sc \small 
Kevin Zumbrun\thanks{Indiana University, Bloomington, IN 47405;
kzumbrun@indiana.edu:
Research of K.Z. was partially supported
under NSF grants number DMS-0300487,  
DMS-0505780, and DMS-0801745.
}}
%%%%%%%%%%%%%%%%%%

\maketitle

\begin{abstract}
%We give a simplified version of Kato's ODE propagating
We present a family of numerical implementations of Kato's ODE propagating
global bases of analytically varying invariant subspaces, 
%whose numerical implementation as a first-order Euler scheme
%leads to a surprising simple ``greedy algorithm'' that is
of which the first-order version is
a surprising simple ``greedy algorithm'' that is
both stable and easy to program and the second-order
version a relaxation of a first-order scheme of Brin and Zumbrun.
The method has application to numerical Evans function computations
used to assess stability of traveling-wave solutions of
time-evolutionary PDE.
\end{abstract}

\section{Introduction}\label{intro}

Let $P(\lambda)\in \CC^{n\times n}$ be a projection,
$P^2=P$, and subspace $S(\lambda)\subset \CC^n$ its range,
with $P$ depending analytically on $\lambda$ within a simply
connected subset $\Lambda$ of the complex plane.
Then, a standard result in matrix perturbation theory \cite{K}
is that there exists a global analytic basis $\{r_j(\lambda)\}$
of $S$ on $\Lambda$; moreover, expressing $r_j$ as column vectors,
this can be prescribed constructively
as the solution $R=(r_1, \dots, r_k)$ of {\it Kato's ODE} 
\begin{equation}\label{Kato}
R'=(P'P-PP')R, \quad R(\lambda_0)=R^0,
\end{equation}
where the initializing value $R^0$ 
is any matrix whose columns form a basis for $S(\lambda_0)$,
and ``$'$'' denotes differentiation with respect to $\lambda$.
This prescription is also ``minimal'' in the sense that $PR'\equiv 0$,
i.e., the derivative of basis $R$ lies entirely in the direction complementary
to its span (the kernel of $P$); see \cite{HSZ,HLZ} for further discussion.

The problem of computing such an analytically varying basis is
important in numerical Evans function computations for determining
stability of traveling waves. 
Roughly speaking, analytic bases for stable and unstable manifolds
of certain limiting coefficient matrices are used to define an
analytic {\it Evans function}, whose winding number around a contour
$\Gamma \subset \Lambda$ counts the number of unstable eigenvalues
enclosed by $\Gamma$ of the linearized operator about the wave,
with zero winding number corresponding to stability.
For further discussion, see \cite{GZ,Br,BrZ,BDG,HSZ,HuZ,BHRZ,HLZ,CHNZ,HLyZ1,HLyZ2} 
and references therein.

Various algorithms for numerical determination of bases $R$ have been
introduced in \cite{BrZ,BDG,HSZ}, each of which turn out to be
equivalent to (a discretization of) \eqref{Kato}, and each of
which is $O(n^3)$ in complexity (though with different coefficients).
The purpose of this brief note is to introduce a new and particularly simple
discretization of \eqref{Kato}, which at the first order of accuracy consists
of what might be called a ``local greedy algorithm''.
Namely, choosing a set of mesh points $\lambda_j$ around $\Gamma$ and 
denoting by $R_j$ the approximation of $R(\lambda_j)$, our first-order
scheme is simply
\begin{equation}\label{greedy}
R_{j+1}=P_{j+1}R_j, \quad R_0=R^0.
\end{equation}
That is, the continuation of basis $R$ to each new step is obtained
by projecting the value at the previous step onto the new subspace:
the simplest possible choice, and one that at first sight seems
entirely local.
However, {remarkably, this local choice leads to a globally defined
basis}; in particular, upon traversing the entire contour $\Gamma$ and
returning to $\lambda_L=\lambda_0$, we find that, up to convergence
error, the value of $R_L$ returns to the starting value $R_0=R^0$.
This is both simpler and faster than its closest relative in \cite{BrZ};
indeed, it is completely trivial to program (the main issue
in most Evans function applications).

%This simplification is based in turn on a reduction of Kato's
%original ODE \eqref{Kato} to the simpler form
This simplification is based on the reduced version 
%original ODE \eqref{Kato} to the simpler form
\begin{equation}\label{rKato}
R'=P'R, \quad R(\lambda_0)=R^0,
\end{equation}
of \eqref{Kato} (\cite{K}, pp. 99-101),
%an observation that seems of interest in its own right.
%The same observation yields minimal difference schemes to
which readily yields minimal difference schemes to
all orders of accuracy.
A particularly attractive version when speed is an issue
appears to be the second-order version, which is a relaxation
of the first-order scheme in \cite{BrZ}.
%Although presented in \cite{K}, pp. 99-101,
%\eqref{rKato} does not seem to have been implemented
%for this purpose.

\subsection{The reduced ODE}

%We begin by justifying the reduced Kato ODE \eqref{rKato}, in
%the process recovering the main properties of \eqref{Kato}.
We begin by recalling the properties of the reduced Kato ODE \eqref{rKato}.

\begin{prop}
There exists a global solution
$R$ of \eqref{rKato} on any simply connected domain $\Lambda$
containing $\lambda_0$, with $\rank R(\lambda)\equiv \rank R^0$.
Moverover, if $P(\lambda_0)R^0=R^0$, then:
(i) $PR\equiv R$.
(ii) $PR'\equiv 0$.
(iii) $R$ satisfies \eqref{Kato}.
\end{prop}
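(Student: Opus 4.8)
The plan is to separate the two assertions. Global existence together with constancy of rank is a matter of linear ODE theory, whereas the structural identities (i)--(iii) come from differentiating the idempotent relation $P^2=P$ and then invoking uniqueness for a homogeneous linear equation.

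For the first part, $P$ analytic on $\Lambda$ forces $P'$ analytic on $\Lambda$, so \eqref{rKato} is a linear ODE with analytic coefficient matrix. On the simply connected domain $\Lambda$ one then has a single-valued global analytic fundamental solution $\Phi$ solving $\Phi'=P'\Phi$, $\Phi(\lambda_0)=\Id$ --- local existence/uniqueness extended by analytic continuation, with simple connectivity (the monodromy theorem) ruling out multivaluedness --- and the solution of \eqref{rKato} is $R=\Phi R^0$. To obtain $\rank R(\lambda)\equiv\rank R^0$ it suffices that each $\Phi(\lambda)$ be invertible; this follows from Abel's formula $\det\Phi(\lambda)=\exp\int_{\lambda_0}^{\lambda}\Trace P'$ together with the observation that $\Trace P(\lambda)=\rank P(\lambda)$ is a constant integer on the connected set $\Lambda$, hence $\Trace P'\equiv 0$ and $\det\Phi\equiv 1$.

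For the identities, the algebraic input is obtained by differentiating $P^2=P$ to get $P'P+PP'=P'$, and then multiplying on the left and on the right by $P$ (using $P^2=P$) to get $PP'P=0$ (and, symmetrically, $(\Id-P)P'(\Id-P)=0$). Assume now $P(\lambda_0)R^0=R^0$ and set $W:=(\Id-P)R$, so that $W(\lambda_0)=R^0-R^0=0$. The crux is to show $W$ satisfies a \emph{homogeneous} linear ODE. Writing $R=PR+W$ before differentiating the definition of $W$, one gets $W'=-P'R+(\Id-P)P'R=-P'(PR+W)+(\Id-P)P'(PR+W)$; the terms multiplying $PR$ sum to $\bigl(-P'+(\Id-P)P'\bigr)PR=-PP'PR=0$, leaving precisely $W'=-PP'W$. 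By uniqueness for the initial value problem $W'=-PP'W$, $W(\lambda_0)=0$, we conclude $W\equiv 0$, i.e.\ $PR\equiv R$, which is (i). Then (ii) follows from $PR'=PP'R=PP'(PR)=(PP'P)R=0$, and (iii) from $R'=P'R=P'(PR)=P'PR$ together with $(P'P-PP')R=P'PR-PP'(PR)=P'PR-(PP'P)R=P'PR=R'$, so that $R$ solves \eqref{Kato}.

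The one step that is not routine is the passage to the homogeneous equation $W'=-PP'W$: it is essential to substitute $R=PR+W$ before differentiating, since it is this substitution combined with the cancellation $PP'P=0$ that removes the would-be inhomogeneous term $-P'R+(\Id-P)P'R$ and thereby licenses the uniqueness argument. Once (i) is in hand, the remaining identities and the reconciliation with Kato's ODE \eqref{Kato} are bookkeeping with the projection relations $PR=R$, $P'P+PP'=P'$, and $PP'P=0$.
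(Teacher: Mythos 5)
Your proposal is correct and follows essentially the same route as the paper: the same key identity $PP'P=0$ obtained by differentiating $P^2=P$, the same homogeneous ODE for $(\Id-P)R$ (the paper's $PR-R$ up to sign) closed by uniqueness, and the same derivations of (ii) and (iii); your Abel's-formula justification of rank constancy merely spells out what the paper asserts in one line.
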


\begin{proof}
As a linear ODE with analytic coefficients, \eqref{rKato}
possesses an analytic solution in a neighborhood of $\lambda_0$,
that may be extended globally along any curve, whence, by
the principle of analytic continuation, it possesses a global
analytic solution on any simply connected domain containing $\lambda_0$
\cite{K}.
Constancy of $\rank R(\lambda)$ follows likewise
by the fact that $R$ satisfies a linear ODE.

Differentiating the identity $P^2=P$ following \cite{K} yields
$PP'+P'P=P'$, whence, multiplying on the right by $P$,
we find the key property
\begin{equation}\label{Pprop}
PP'P=0.
\end{equation}
From \eqref{Pprop}, we obtain
$$
\begin{aligned}
(PR-R)'&=(P'R+PR'-R')
= P'R+(P-I) P'R
= P P'R,\\
\end{aligned}
$$
which, by $PP'P=0$ and $P^2=P$ gives
$$
\begin{aligned}
(PR-R)'&= -PP'(PR-R), \quad (PR-R)(\lambda_0)=0,
\end{aligned}
$$
from which (i) follows by uniqueness of solutions
of linear ODE.
Expanding $PR'=PP'R$ and using
$PR=R$ and $PP'P=0$, we obtain $PR'=PP'PR=0$, verifying (ii).
Finally, using (i) and (ii), we obtain
$R'=P'R= P'PR- PR'=(P'P-PP')R$, verifying (iii).
\end{proof}

\begin{rem}\label{derivation}
\textup{
Conversely, (i)--(ii) imply \eqref{rKato}
through $R'=(PR)'=P'R+PR'=P'R$.
}
\end{rem}

\subsection{Numerical implementation}

\subsubsection{First-order version}
Approximating $P'(\lambda_j)$ to first order by the
finite difference $(P_{j+1}-P_j)/(\lambda_{j+1}-\lambda_j)$
and substituting this into a first-order Euler scheme gives
$$
R_{j+1}=R_j + (\lambda_{j+1}-\lambda_j) 
\frac{P_{j+1}-P_j}{\lambda_{j+1}-\lambda_j}
R_j,
$$
or
$R_{j+1}=R_j + P_{j+1}R_j - P_jR_j$, 
yielding greedy algorithm \eqref{greedy} by
the property $P_jR_j=R_j$ (Note: preserved exactly
by the scheme).

\begin{rem}
\textup{
The same procedure applied to the original equation \eqref{Kato} yields
$$
R_{j+1}=R_j + (P_{j+1}P_j-P_j P_{j+1})R_j,
$$
or, following with a projection $P_{j+1}$
to stabilize the scheme without changing the order of accuracy, 
the first-order scheme
\begin{equation}\label{old1}
\begin{aligned}
R_{j+1}&= P_{j+1}R_j + P_{j+1} (P_{j+1}P_j-P_j P_{j+1}) R_j
= P_{j+1}\big( I + P_j(I- P_{j+1}) \big) R_j.\\
\end{aligned}
\end{equation}
%introduced in \cite{BrZ}, approximately three times more
%costly (two evaluations of $P$ and three matrix multiplications
introduced in \cite{BrZ}.  This is slightly more costly at
two evaluations of $P$ on the average and three matrix multiplications
vs. one evaluation of $P$ and one matrix multiplication for \eqref{greedy}.
Depending on the cost of evaluating $P$, and whether or not the mesh
is fixed (in which case evaluations of $P$ may be shared), 
this can vary between approximately
one and three times the cost of \eqref{greedy}.
}
\end{rem}

\subsubsection{Second-order version}

To obtain a second-order discretization of \eqref{rKato},
we approximate
$ R_{j+1}-R_j \approx \Delta \lambda_j P'_{j+1/2}R_{j+1/2}$,
good to second order, where $\Delta \lambda_j:=\lambda_{j+1}-\lambda_j$.  
Noting that $R_{j+1/2}\approx P_{j+1/2}R_j$ to second order, by
\eqref{greedy}, and approximating
$ P_{j+1/2}\approx \frac{1}{2}(P_{j+1}+P_j)$, also good to second order,
and  
$P'_{j+1/2}\approx (P_{j+1}-P_j)/\Delta \lambda_j$,
we obtain, putting everything together and rearranging,
$$
R_{j+1}=R_j + \frac{1}{2}(P_{j+1}-P_j)(P_{j+1}+ P_j)R_j.
$$

Stabilizing by following with a projection $P_{j+1}$, we obtain
after some rearrangement the reduced second-order explicit scheme
\begin{equation}\label{greedy2}
R_{j+1}= P_{j+1}[ I + \frac{1}{2}P_j (I-P_{j+1})] R_j,
\end{equation}
which may be recognized as a relaxation of the first-order
scheme \eqref{old1}.

%This has the same computational cost as \eqref{old1}, i.e., two evaluations
This has the same computational cost as \eqref{old1}, i.e., two
evaluations of $P_j$ and three matrix multiplications, while the number of
steps goes as $1/\sqrt{Tolerance}$ vs. $1/Tolerance$ for first-order,
so $10$ times fewer for typical tolerance $10^{-2}$, for
computational savings of ten times over \eqref{old1} and
four times over \eqref{greedy}, in the worst case ($P$ inexpensive
compared to matrix multiplication) that \eqref{greedy2} is
three times as expensive as \eqref{greedy}.
%$10/(5/2)$
%This is the version we recommend when speed is a substantial issue.
%For most purposes we recommend the simpler greedy algorithm \eqref{greedy}.
This is the version we recommend for serious computations.
%For individual numerical experiments we recommend the simpler greedy algorithm \eqref{greedy}.
For individual numerical experiments the simpler greedy 
algorithm \eqref{greedy} will often suffice (see discussion,
Section \ref{evans}).

\subsection{Third and higher-order versions}\label{extrapolation}

Arbitrarily higher-order schemes may be obtained 
by Richardson extrapolation starting from scheme 
\eqref{greedy} or \eqref{greedy2}.
For example, second-order Richardson extrapolation applied to 
\eqref{greedy} yields an alternative second-order scheme
\begin{equation}\label{2ex}
R_{j+1}=P_{j+1}(2P_{j+1/2}-I)R_j,
\end{equation}
while third-order extrapolation applied to \eqref{2ex} yields
the third-order scheme
\begin{equation}\label{3ex}
R_{j+1}=P_{j+1}\Big[
\frac{4}{3}\Big(2P_{j+3/4}-I\Big)P_{j+1/2}
\Big(2P_{j+1/4}-I\Big)
-
\frac{1}{3}\Big(2P_{j+1/2}-I\Big)
\Big] R_j.
\end{equation}
Third-order extrapolation applied to \eqref{greedy2} yields
the simpler third-order scheme
\begin{equation}\label{greedy3}
\begin{aligned}
R_{j+1}&=P_{j+1}\Big[
\frac{4}{3}\Big(I+\frac{1}{2}P_{j+1/2}(I-P_{j+1})\Big)P_{j+1/2}
\Big(I+\frac{1}{2}P_{j}(I-P_{j+1/2})\Big)\\
&\quad
-\frac{1}{3}
\Big(I+\frac{1}{2}P_{j}(I-P_{j+1})\Big)
\Big] R_j.
\end{aligned}
\end{equation}
Here, fractional indices denote points along the line segment between
$\lambda_j$ and $\lambda_{j+1/2}$ with corresponding fractional distance.

More generally, denoting by $\cT^{m,h}$ the matrix
advancing $R_j$ to $R_{j+1}$ for an $m$th order scheme
with step $h\in \CC$, we obtain by Richardson extrapolation
an $(m+1)$st order scheme
$$
\cT^{m+1,2h}_j=
\frac{2^m}{2^m-1}
\cT^{m,h}_{j+1} \cT^{m,h}_j
-\frac{1}{2^m-1}
\cT^{m,2h}_{j}. 
$$
%TODO: better to obtain $\cT^{m+1,h}_j$! Thus lending to reusing steps...
When $P$ is costly, 
it appears preferable to use schemes involving evaluations
at only integer steps, in order to share evaluations of $P$
(of course, this assumes a fixed, or non-adaptive, mesh, which may or may not
be desirable).
For example, explicit $m$th-order Euler approximating derivatives of
$P$ at $\lambda_j$ by 
%one-sided differences 
Lagrange interpolation yields an $(m+1)$-step
scheme with integer steps.
%NOTE: when adaptive step sizes used, extrapolation preferable,
%since gets us farther it seems...
However, the complexity of the resulting schemes makes these
unappealing in practice.

Among higher-order schemes, we thus suggest only 
\eqref{greedy2}, or for strict tolerance \eqref{greedy3}.

\subsection{Implementation in numerical Evans function computations}\label{evans}

For Evans computations, $P(\lambda)$ is the eigenprojection onto the
stable (unstable) subspace of a given matrix $A(\lambda)$.
Thus, it may be prescribed uniquely as
\begin{equation}\label{Prep}
P(\lambda)=
R(L^*R)^{-1}L^*,
\end{equation}
for any choice of right and left bases $R$ and $L$ (matrices
whose columns consist of basis elements, as before).

Ordered Schur decomposition, an $O(n^3)$ operation
supported as an automatic function
in programming packages such as MATLAB, applied to $A$ and $A^*$,
respectively, gives orthonormal right and left bases of
the left and right stable subspaces of $A$, hence an optimally
conditioned choice in \eqref{Prep}.
Thus, evaluation of $P$ is in practice straightforward to program.
On the other hand, it is typically an expensive (i.e., large coefficient)
$O(n^3)$  call involving
Schur decomposition and several matrix multiplications,
so that it is desirable to minimize the number of evaluations
of $P$ in numerical continuation algorithms.
%
%Since each of the explicit Euler schemes described involves a approximately
%a single evaluation of $P$ per step (if values are saved for later steps),
%a higher-order scheme is clearly preferable in this application.
For a fixed, i.e., non-adaptive, mesh (at least 
for \eqref{greedy},
\eqref{greedy2}, or explicit Euler schemes that are evaluated at mesh
points only),
$P$ need be evaluated only once for every mesh point,
so that higher-order schemes are clearly preferable in this application.
On the other hand, evaluation of the Evans function, involving solution
of a further variable-coefficient ODE initialized with $R$, costs 
far more than the computation of $R$, so that these details can be
ignored in most computations with (relatively) little effect.
Ordered Schur decomposition (MATLAB version) has been used with good results
in \cite{HuZ,HLyZ2}.

\end{document}